\definecolor{red}{rgb}{1,0,0}
\definecolor{blue}{rgb}{0,0,.7}
\definecolor{green}{rgb}{0,.6,0}
\definecolor{purp}{rgb}{.5,0,.5}
\numberwithin{figure}{section}   
\newtheorem{thm}{Theorem}[section]
\newtheorem{lem}[thm]{Lemma}
\theoremstyle{definition}
\theoremstyle{definition}
\theoremstyle{definition}
\newcommand{\exm}{\operatorname{ex}}
\newcommand{\sat}{\operatorname{sat}}
\newcommand{\bit}{\begin{itemize}}
\newcommand{\eit}{\end{itemize}}
\newcommand{\ben}{\begin{enumerate}}
\newcommand{\een}{\end{enumerate}}
\newcommand{\beq}{\begin{equation}}
\newcommand{\eeq}{\end{equation}}
\newcommand{\bea}{\begin{eqnarray*}} 
\newcommand{\eea}{\end{eqnarray*}}
\newcommand{\bpf}{\begin{proof}}
\newcommand{\epf}{\end{proof}\ms}
\newcommand{\bmt}{\begin{bmatrix}}
\newcommand{\emt}{\end{bmatrix}}
\newcommand{\ms}{\medskip}
\newcommand{\noi}{\noindent}
\title{Almost all permutation matrices have bounded saturation functions}
\author{Jesse Geneson}
\begin{document}
\maketitle

\begin{abstract}
Saturation problems for forbidden graphs have been a popular area of research for many decades, and recently Brualdi and Cao initiated the study of a saturation problem for 0-1 matrices. We say that 0-1 matrix $A$ is saturating for the forbidden 0-1 matrix $P$ if $A$ avoids $P$ but changing any zero to a one in $A$ creates a copy of $P$. Define $\sat(n, P)$ to be the minimum possible number of ones in an $n \times n$ 0-1 matrix that is saturating for $P$. Fulek and Keszegh proved that for every 0-1 matrix $P$, either $\sat(n, P) = O(1)$ or $\sat(n, P) = \Theta(n)$. They found two 0-1 matrices $P$ for which $\sat(n, P) = O(1)$, as well as infinite families of 0-1 matrices $P$ for which $\sat(n, P) = \Theta(n)$. Their results imply that $\sat(n, P) = \Theta(n)$ for almost all $k \times k$ 0-1 matrices $P$.

Fulek and Keszegh conjectured that there are many more 0-1 matrices $P$ such that $\sat(n, P) = O(1)$ besides the ones they found, and they asked for a characterization of all permutation matrices $P$ such that $\sat(n, P) = O(1)$. We affirm their conjecture by proving that almost all $k \times k$ permutation matrices $P$ have $\sat(n, P) = O(1)$. We also make progress on the characterization problem, since our proof of the main result exhibits a family of permutation matrices with bounded saturation functions.
\end{abstract}

\noi {\bf Keywords} saturation functions, extremal functions, pattern avoidance, 0-1 matrices

\noi{\bf AMS subject classification} 05D99 \bigskip

\section{Introduction}
We say that 0-1 matrix $A$ \emph{contains} the 0-1 matrix $P$ if some submatrix of $A$ is either equal to $P$ or can be turned into $P$ by changing some ones to zeroes. Otherwise, we say that $A$ \emph{avoids} $P$. The extremal function $\exm(n, P)$ is defined as the maximum number of ones in an $n \times n$ 0-1 matrix that avoids $P$. This extremal function has been applied to a wide range of problems including the proof of the Stanley-Wilf conjecture \cite{mt}, the best known bounds on the maximum number of unit distances in a convex $n$-gon \cite{furedi}, a bound on the complexity of an algorithm for finding a minimal path in a rectlilinear grid with obstacles \cite{mitchell}, and bounds on extremal functions of forbidden sequences \cite{pettie}. 

There is a long history of the study of extremal functions of forbidden 0-1 matrices, see e.g. \cite{ts, kst, bg, pt, h, cm, gst}. Marcus and Tardos \cite{mt} proved that $\exm(n, P) = O(n)$ for every permutation matrix $P$, and used this fact to resolve the F\"{u}redi-Hajnal conjecture \cite{fh}, which also resolved the Stanley-Wilf conjecture using an earlier result of Klazar \cite{k}. In a different paper, Tardos finished bounding the extremal functions of all forbidden 0-1 matrices with at most four ones up to a constant factor \cite{T}. Many connections have been found between extremal functions of forbidden 0-1 matrices and the maximum possible lengths of generalized Davenport-Schinzel sequences \cite{ck0, cds, ff0, N, pettieds, wp}. 

Besides permutation matrices, other families of forbidden 0-1 matrices with linear extremal functions have been found including 0-1 matrices obtained by doubling permutation matrices \cite{g}, 0-1 matrices corresponding to visibility graphs \cite{fulek, gs}, and 0-1 matrices obtained by performing operations on smaller 0-1 matrices  with linear extremal functions \cite{keszegh, pettie}. After the result of Marcus and Tardos, Fox \cite{fox} proved that almost all $k \times k$ permutation matrices have $\exm(n, P) = 2^{\Omega(\sqrt{k})}n$. Fox also showed that $\exm(n, P) = 2^{O(k)}n$ for every $k \times k$ permutation matrix $P$, and these results for permutation matrices were also generalized to extremal functions of forbidden $d$-dimensional permutation matrices \cite{km, gt, ck}.

Saturation problems have been studied for graphs \cite{dkm, ehm, fk0, kt}, posets \cite{fkk, klm}, and set systems \cite{fkk0, gkl}. Recently Brualdi and Cao initiated the study of a saturation problem for 0-1 matrices \cite{bc}, which is different from the saturation problem for 0-1 matrices defined in \cite{dpt}. We say that 0-1 matrix $A$ is \emph{saturating} for $P$ if $A$ avoids $P$ but changing any zero to a one in $A$ creates a copy of $P$. Define the saturation function $\sat(n, P)$ to be the minimum possible number of ones in an $n \times n$ 0-1 matrix that is saturating for $P$. Brualdi and Cao proved for $n \geq k$ that $\exm(n, P) = \sat(n, P)$ for all $k \times k$ identity matrices $P$. 

Fulek and Keszegh \cite{fk} found a general upper bound on the saturation function in terms of the dimensions of $P$, and proved for every 0-1 matrix $P$ that either $\sat(n, P) = O(1)$ or $\sat(n, P) = \Theta(n)$. They found two 0-1 matrices $P$ for which $\sat(n, P) = O(1)$, the $1 \times 1$ identity matrix $I_1$ and the $5 \times 5$ permutation matrix $Q = \begin{bmatrix}
0 & 0 & 0 & 1 & 0\\
1 & 0 & 0 & 0 & 0\\
0 & 0 & 1 & 0 & 0\\
0 & 0 & 0 & 0 & 1\\
0 & 1 & 0 & 0 & 0
\end{bmatrix}$. Note that any matrix $P$ that can be obtained from $Q$ by any sequence of vertical/horizontal reflections or $90$ degree rotations must also have $\sat(n, P) = O(1)$, since vertical/horizontal reflections and $90$ degree rotations do not change $\sat(n, P)$ or $\exm(n, P)$. Fulek and Keszegh also found infinite classes of 0-1 matrices $P$ for which $\sat(n, P) = \Theta(n)$, including 0-1 matrices $P$ in which every column has at least two ones, or every row has at least two ones. This result implies that $\sat(n, P) = \Theta(n)$ for almost all $k \times k$ 0-1 matrices $P$, since almost all $k \times k$ 0-1 matrices $P$ have at least two ones in every row and at least two ones in every column.

Fulek and Keszegh conjectured that there are many more 0-1 matrices $P$ such that $\sat(n, P) = O(1)$ besides $I_1$ and $Q$. They asked for a characterization of all permutation matrices $P$ for which $\sat(n, P) = O(1)$. We affirm Fulek and Keszegh's conjecture by proving that almost all $k \times k$ permutation matrices $P$ have $\sat(n, P) = O(1)$. In order to prove this result, we define a family of permutation matrices $P$ for which $\sat(n, P) = O(1)$, and then we prove that almost all $k \times k$ permutation matrices are in this family. We say that a permutation matrix $Q$ is \emph{ordinary} if no matrix that can be obtained from $Q$ by any sequence of vertical/horizontal reflections or $90$ degree rotations is in any of the following classes. We refer to the classes below as Class 1, Class 2, Class 3, and Class 4. We say that $Q$ \emph{reduces} to the Class $i$ if there is a sequence of vertical/horizontal reflections or $90$ degree rotations from $Q$ to an element of Class $i$. So, a permutation matrix $Q$ is ordinary if $Q$ does not reduce to any of the following classes.

\begin{enumerate}
\item Class 1 consists of permutation matrices $P$ for which the rows can be split into two nonempty sets $R_1$, $R_2$ of contiguous rows (where the rows in $R_1$ precede the rows in $R_2$) and the columns can be split into two nonempty sets $C_1$, $C_2$ of contiguous columns (where the columns in $C_1$ precede the columns in $C_2$) such that $P$ has ones in the submatrices restricted to $R_1 \times C_1$ and $R_2 \times C_2$, and $P$ only has ones in those submatrices except for at most $2$ ones in the submatrix restricted to $R_1 \times C_2$. If $P$ has two ones in the submatrix restricted to $R_1 \times C_2$, then they are in adjacent rows.

\item Class 2 consists of permutation matrices $P$ for which the rows can be split into three nonempty sets $R_1$, $R_2$, $R_3$ of contiguous rows (where the rows in $R_1$ precede the rows in $R_2$, which precede the rows in $R_3$) and the columns can be split into two nonempty sets $C_1$, $C_2$ of contiguous columns (where the columns in $C_1$ precede the columns in $C_2$) such that $P$ has ones in the submatrices restricted to $R_1 \times C_2$ and $R_3 \times C_2$, $P$ has at least two ones in the submatrix restricted to $R_2 \times C_1$, and $P$ only has ones in those submatrices except for at most $2$ ones in the submatrix restricted to $R_2 \times C_2$. If $P$ has two ones in the submatrix restricted to $R_2 \times C_2$, then they are in adjacent rows.

\item Class 3 consists of permutation matrices $P$ which can be obtained by starting with an element $X$ of Class $2$, taking the row $r$ of $X$ containing the rightmost one in the submatrix restricted to $R_2 \times C_1$, deleting $r$ from its current position, and adding $r$ in front of the first row of $X$.

\item Class 4 consists of permutation matrices $P$ for which the rows can be split into three nonempty sets $R_1$, $R_2$, $R_3$ of contiguous rows (where the rows in $R_1$ precede the rows in $R_2$, which precede the rows in $R_3$) and the columns can be split into three nonempty sets $C_1$, $C_2$, $C_3$ of contiguous columns (where the columns in $C_1$ precede the columns in $C_2$, which precede the columns in $C_3$) such that $P$ has ones in the submatrices restricted to $R_1 \times C_2$, $R_2 \times C_1$, $R_2 \times C_3$, and $R_3 \times C_2$, and only in those submatrices.

\end{enumerate}

There is clearly some overlap between the classes. For example, in the definition of Class 3, when we construct a permutation matrix $P$ in Class 3 from a permutation matrix $X$ in Class 2, the only instances when $P$ will not be an element of Class 2 are when $X$ has exactly two ones in the submatrix restricted to $R_2 \times C_1$. 

In the next section, we prove that every ordinary permutation matrix has bounded saturation function and almost all $k \times k$ permutation matrices are ordinary. In the final section, we discuss some possible directions for future research, including saturation functions of forbidden $d$-dimensional 0-1 matrices.

\section{New results}
 In this section, we will prove the main result below. 

\begin{thm}\label{mainth}
Almost all $k \times k$ permutation matrices $P$ have $\sat(n, P) = O(1)$. 
\end{thm}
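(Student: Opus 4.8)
The plan is to deduce Theorem~\ref{mainth} from two separate statements, proved independently: \textbf{(A)} every ordinary $k\times k$ permutation matrix $P$ satisfies $\sat(n,P)=O(1)$; and \textbf{(B)} the fraction of $k\times k$ permutation matrices that are ordinary tends to $1$ as $k\to\infty$. Since vertical/horizontal reflections and $90^{\circ}$ rotations preserve $\sat(n,\cdot)$ and generate a group of order $8$, a permutation matrix is non-ordinary exactly when some member of its (size at most $8$) dihedral orbit lies in Class $1$, $2$, $3$, or $4$; so (A) and (B) together give the theorem. Note that this route makes no use of Fulek and Keszegh's $O(1)$-versus-$\Theta(n)$ dichotomy: part (A) is a direct construction.

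For part (A), fix an ordinary permutation matrix $P$ of size $k$. For all sufficiently large $n$ (smaller $n$ being vacuous, since then $\sat(n,P)\le n^{2}=O(1)$) I want an $n\times n$ 0-1 matrix $A$ with only $O(1)$ ones --- the implied constant depending only on $k$ --- that avoids $P$ yet has the property that switching any $0$ of $A$ to a $1$ creates a copy of $P$; such an $A$ yields $\sat(n,P)\le(\text{number of ones in }A)=O(1)$. Since the dihedral group fixes both $\sat(n,\cdot)$ and ordinariness, I first replace $P$ by a convenient image in its orbit. I would build $A$ from a bounded number of copies of initial and terminal row-segments and column-segments of $P$, each shifted and ``stretched'' by inserting empty rows and columns (which does not change the pattern, as the stretch is monotone), placed so that every one of $A$ lies in one of $O(1)$ rows and one of $O(1)$ columns; the occupied rows and columns then carve the rest of the grid into $O(1)$ rectangular ``open zones.'' The two things to check are: (i) the whole set of ones of $A$ does not contain $P$; and (ii) for every $0$-cell $z$ of $A$, some subfamily of the ones of $A$ together with $z$ forms a copy of $P$. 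I would verify (ii) by a finite case analysis on where $z$ sits relative to the occupied rows and columns (a cell in the interior of an open zone, a cell in an occupied row but not an occupied column, and so on), exhibiting the completing copy explicitly in each case and using monotonicity of the stretches to pin down the relative order of the chosen ones.

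The main obstacle --- and what I expect to be the technical heart of the paper --- is that this scheme does \emph{not} work for every $P$: for some $P$ the full set of ones already contains $P$, or some open zone admits no completion. The crux is to show that every such obstructive $P$ reduces, under the dihedral group, to one of the four explicitly described block patterns; equivalently, that the construction succeeds whenever $P$ is ordinary. Morally, Classes~1--4 are the ``near-decomposable'' permutation matrices in the various orientations --- Class~1 being an almost-direct-sum with at most two off-block ones, and Classes~2--4 the analogous degenerate two- and three-block shapes --- and near-decomposability is exactly what obstructs ``routing through'' $P$ inside the construction, so ruling it out validates the scheme. This step also furnishes the promised family of permutation matrices with bounded saturation functions, namely the ordinary ones, and I would expect the bookkeeping here (tracking which zone each type of added one lands in, and certifying avoidance) to be the longest part of the argument.

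For part (B), there are $k!$ permutation matrices of size $k$, and by the first paragraph the number of non-ordinary ones is at most $8$ times the total number of size-$k$ permutation matrices lying in Classes~1--4; so it suffices to bound each class by $o(k!)$. A permutation matrix in a fixed class is determined by a choice of contiguous row-blocks and column-blocks (boundedly many) together with at most two ``defect'' ones, and then an unrestricted permutation inside each block. The number of choices of blocks and defects is polynomial in $k$, and the number of permutations realizing a fixed such structure is a product of factorials of the block sizes; the conditions in the definitions (every block nonempty, and in Classes~2--4 at least two ones in a designated block, forcing at least two blocks on each side) keep every block size positive. Absorbing the polynomial factor, block by block, into the corresponding factorials by enlarging each block size by a constant, the count for one class becomes a sum of the form $\sum_{b} b!\,(k+O(1)-b)!$ with $b$ ranging so that both factorial arguments stay at most $k-1$; since such a sum is dominated by its two extreme terms it is $O((k-1)!)$. (The key special case is $\sum_{a=1}^{k-1} a!\,(k-a)! = k!\sum_{a=1}^{k-1}\binom{k}{a}^{-1} = O((k-1)!)$.) Hence each class has $O((k-1)!)=o(k!)$ members, so the non-ordinary permutation matrices number $o(k!)$, which is part (B) and completes the proof.
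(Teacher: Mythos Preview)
Your two-part plan---(A) every ordinary $P$ has $\sat(n,P)=O(1)$, (B) almost all $k\times k$ permutation matrices are ordinary---is exactly the paper's structure, and your counting sketch for (B), reducing each class to sums of the shape $\sum a!\,(k-a)! = O((k-1)!)$ after absorbing the polynomial-in-$k$ number of block-boundary and defect choices, is essentially the paper's lemma.

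Where you diverge is in the mechanics of (A). The paper does \emph{not} attempt to build a saturating matrix directly. It fixes a $(6k+1)\times(6k+1)$ template $T_P$ consisting of four ``sections'' $P_N,P_S,P_E,P_W$ (copies of $P$ with one extremal entry deleted), placed so that the middle row and middle column are all zero and so that flipping any cell \emph{on that middle cross} completes to a copy of $P$ via one of the sections. The avoidance check for $T_P$ is precisely the step you flag as the crux: a hypothetical copy of $P$ inside $T_P$ must hit at least two sections, and the induced block structure forces $P$ (up to the dihedral action) into one of Classes~1--4. But $T_P$ itself is not saturating---zeros off the middle cross need not complete---and the paper makes no attempt to certify them one by one. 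Instead it greedily adds ones to $T_P$ (necessarily off the cross, since cross-flips already create $P$) until nothing more can be added; the result $T'_P$ is saturating with at most $(6k+1)^2$ ones, and for general $n$ one inserts extra all-zero rows and columns along the middle cross.

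Your proposal instead asks for an $A$ that is saturating outright, verifying by case analysis that \emph{every} zero cell completes. With only the four segment-style pieces you sketch, this cannot close: cells far from the cross have nothing to complete against, and adding further segments to cover them would alter the avoidance analysis (and hence the class list). The greedy-completion-then-insert trick is exactly what fills this gap without ever describing what happens off the cross, and it is the one concrete ingredient your outline of (A) is missing.
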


In order to prove the main result, we first show that all ordinary permutation matrices have bounded saturation functions, and then we prove that almost all $k \times k$ permutation matrices are ordinary.

\begin{thm}
Every ordinary permutation matrix $P$ has $\sat(n, P) = O(1)$. 
\end{thm}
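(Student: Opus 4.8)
The plan is to construct, for each ordinary permutation matrix $P$ of size $k \times k$, a family of $n \times n$ 0-1 matrices $A_n$ with $O(1)$ ones (the constant depending only on $k$) that are saturating for $P$, in the sense that each $A_n$ avoids $P$ but flipping any zero to a one creates a copy of $P$. Since the saturation function is unchanged under vertical/horizontal reflections and $90^\circ$ rotations, and since ordinariness is closed under these operations by definition, it suffices to produce such a construction for $P$ itself. The natural candidate is a matrix $A_n$ supported on $O(1)$ ones concentrated near a corner (or along a short initial segment of the main diagonal), so that almost all of the grid is empty; the task is then to verify the two conditions, avoidance and saturation.

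First I would fix a small ``gadget'' $G$ — a 0-1 matrix with $O_k(1)$ ones — placed in the top-left region of $A_n$, with every other entry zero. For avoidance, I would argue that any occurrence of $P$ inside $A_n$ uses only rows and columns meeting the support of $G$, hence reduces to $P$ occurring in $G$ itself, so it is enough that $G$ avoids $P$; since $|G|$ is bounded, this is a finite check dictated by the construction. For saturation, I would take an arbitrary zero entry $(i,j)$ of $A_n$ and show the matrix $A_n + E_{ij}$ contains $P$: the copy of $P$ should use the new one $(i,j)$ together with a carefully chosen set of ones from $G$. Here the location of $(i,j)$ relative to the support of $G$ splits into a few cases (roughly: $(i,j)$ above/left of $G$, inside the bounding box of $G$, or below/right of $G$), and in each case one must exhibit the appropriate ones of $G$ that, with $(i,j)$ inserted, realize $P$ in the correct row/column order. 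This is exactly the place where the four forbidden classes enter: the definition of ``ordinary'' is engineered so that $P$ \emph{cannot} be built from $G$ plus a single extra one in any of the ``bad'' configurations that would obstruct saturation, which is why Classes 1–4 (and their reflections/rotations) are precisely excluded.

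The main obstacle I anticipate is designing the gadget $G$ so that it \emph{simultaneously} satisfies both requirements for every ordinary $P$: a richer $G$ makes saturation easier (more ones available to complete a copy of $P$ after inserting $(i,j)$) but makes avoidance harder, and vice versa. In particular, the saturation check for a new one placed in the ``interior'' of $G$'s bounding box — between two rows and two columns already carrying ones of $G$ — is the delicate one, since an interior insertion is the most likely to complete a copy of $P$ using ones on both sides; the classes are presumably carved out by analyzing exactly when such an interior insertion fails. I would organize the proof by first describing $G$ explicitly (probably in terms of the decomposition of $P$ into a few diagonal-like blocks, since $P$ is a permutation matrix), then proving avoidance as a short structural lemma, and finally doing the saturation case analysis, using the negation of membership in each of Classes 1–4 at the step where that particular case would otherwise break down.

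Once this theorem is in hand, the main result follows by combining it with the (separately proved) fact that almost all $k \times k$ permutation matrices are ordinary.
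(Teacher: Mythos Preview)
Your plan has a structural gap that prevents it from succeeding as stated. With a single bounded gadget $G$ placed in the top-left region (and all other entries zero), consider a zero entry $(i,j)$ far to the bottom-right of $G$. If flipping it is to create a copy of $P$, that copy uses the new one at $(i,j)$ together with ones from $G$, all of which lie strictly above and strictly to the left of $(i,j)$. Hence $(i,j)$ is simultaneously in the bottommost row and rightmost column of the copy, so it must correspond to a one of $P$ in the last row \emph{and} last column, i.e.\ $P_{k,k}=1$. Most ordinary permutation matrices (including the matrix $Q$ from the paper) do not satisfy this, so a single corner gadget cannot be saturating. The same obstruction applies to entries far above-left, above-right, and below-left, so you need separate gadgets covering all four directions.

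This is exactly what the paper does, but with a different organizing idea you are missing: rather than trying to make a fixed gadget saturating for \emph{every} zero entry, the paper builds a bounded matrix $T_P$ with an all-zero middle row and middle column, and arranges four sections $P_N,P_S,P_E,P_W$ (each a copy of $P$ with one extreme one deleted) so that flipping any zero on that middle cross immediately completes one of the four sections to a copy of $P$. Then $T_P$ is greedily completed to a saturating $T'_P$; the middle row and column necessarily remain all zero, and inserting arbitrarily many empty rows and columns along them yields saturating matrices of every size with $O(1)$ ones. You also have the roles of the two verifications reversed: in the paper, saturation on the middle cross is automatic by construction, and the four forbidden classes arise entirely from the \emph{avoidance} analysis (case-splitting on which sections a hypothetical copy of $P$ in $T_P$ meets), not from the saturation case analysis you anticipate.
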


\begin{proof}
For every permutation matrix $P$, we define a 0-1 matrix $T_P$ with an odd number of rows and an odd number of columns such that the middle row and middle column of $T_P$ have all zero entries. We will prove that $T_P$ avoids $P$ for all ordinary permutation matrices $P$. We construct $T_P$ so that changing a zero to a one in the middle row or middle column of $T_P$ creates a copy of $P$.  In order to obtain a 0-1 matrix $T'_P$ from $T_P$ that is saturating for $P$, change zeroes in $T_P$ to ones one at a time without creating a copy of $P$ until it is impossible to change any more zeroes without creating a copy of $P$. By definition, the resulting matrix $T'_P$ must be saturating for $P$. Note that any zeroes in $T_P$ that we change to ones to make $T'_P$ must be off the middle row and off the middle column, or else it will create a copy of $P$.

In order to construct $T_P$, we define four submatrices of $P$: $P_N$ is obtained by deleting the row and column containing the bottom one in $P$, $P_S$ is obtained by deleting the row and column containing the top one in $P$, $P_E$ is obtained by deleting the row and column containing the leftmost one in $P$, and $P_W$ is obtained by deleting the row and column containing the rightmost one in $P$.

We start with $T_P$ as an $n \times n$ matrix of all zeroes with $n = 6k+1$. The exact value of $n$ in this proof is not important, just the fact that it is sufficiently large with respect to $k$. We place a copy of $P$ in the top $k$ rows of $T_P$, with the bottom one of this copy of $P$ in the middle column of $T_P$ and all columns in the copy adjacent, and then we delete the bottom one of $P$ in the copy to leave a copy of $P_N$ in the top $k-1$ rows of $T_P$. We place another copy of $P$ in the leftmost $k$ columns of $T_P$, with the rightmost one of this copy of $P$ in the middle row of $T_P$ and all rows in the copy adjacent, and then we delete the rightmost one of $P$ in the copy to leave a copy of $P_W$ in the leftmost $k-1$ columns of $T_P$.

We place another copy of $P$ in the bottom $k$ rows of $T_P$, with the top one of this copy of $P$ in the middle column of $T_P$. For any columns of $P$ adjacent to the column containing the top one, we put those columns in the bottom $k$ rows adjacent to the middle column of $T_P$. However, any columns of $P$ to the left of these columns go in the bottom $k$ rows directly to the left of the columns that contain the copy of $P_N$ in $T_P$, and any columns of $P$ to the right of these columns go in the bottom $k$ rows directly to the right of the columns that contain the copy of $P_N$ in $T_P$. After placing this copy of $P$ in the bottom $k$ rows of $T_P$, we delete the top one of $P$ in the copy to leave a copy of $P_S$ in the bottom $k-1$ rows of $T_P$. Note that if the top and bottom ones in $P$ are not the leftmost or rightmost ones in $P$, then the union of the columns containing $P_N$ and $P_S$ will consist of two sets of contiguous columns, separated by only the middle column of $T_P$, and the intersection of the columns containing $P_N$ and $P_S$ will consist only of the two columns that are adjacent to the middle column of $T_P$.

We place another copy of $P$ in the rightmost $k$ columns of $T_P$, with the leftmost one of $P$ in the middle row of $T_P$. For any rows of $P$ adjacent to the row containing the leftmost one, we put those rows in the rightmost $k$ columns adjacent to the middle row of $T_P$. However, any rows of $P$ above these rows go in the rightmost $k$ columns directly above the rows that contain the copy of $P_W$ in $T_P$, and any rows in $P$ under these rows go in the rightmost $k$ columns directly under the rows that contain the copy of $P_W$ in $T_P$. After placing this copy of $P$ in the rightmost $k$ columns of $T_P$, we delete the leftmost one of $P$ in the copy to leave a copy of $P_E$ in the rightmost $k-1$ columns of $T_P$. Note that if the leftmost and rightmost ones in $P$ are not the top or bottom ones in $P$, then the union of the rows containing $P_W$ and $P_E$ will consist of two sets of contiguous rows, separated by only the middle row of $T_P$, and the intersection of the rows containing $P_W$ and $P_E$ will consist only of the two rows that are adjacent to the middle row of $T_P$. We call the copies of $P_N, P_S, P_E$, and $P_W$ the \emph{sections} of $T_P$.  This completes the construction of $T_P$, and Figure \ref{fig:tq} shows $T_Q$ and $T_R$ for $Q = 
\begin{bmatrix}
0 & 0 & 0 & 1 & 0\\
1 & 0 & 0 & 0 & 0\\
0 & 0 & 1 & 0 & 0\\
0 & 0 & 0 & 0 & 1\\
0 & 1 & 0 & 0 & 0
\end{bmatrix}$ and $R = 
\begin{bmatrix}
0 & 0 & 0 & 1 & 0 & 0\\
1 & 0 & 0 & 0 & 0 & 0\\
0 & 0 & 1 & 0 & 0 & 0\\
0 & 0 & 0 & 0 & 0 & 1\\
0 & 1 & 0 & 0 & 0 & 0\\
0 & 0 & 0 & 0 & 1 & 0
\end{bmatrix}$.

\begin{figure}

  \begin{subfigure}[b]{0.3\textwidth}
     \tiny
\[
\begin{bmatrix*}[r]
0 & 0 & 0 & 0 & 0 & 0 & 0 & 0 & 0 & 0 & 0 & 0 & 0 & 0 & 0 & \textbf{0} & 0 & \textbf{1} & 0 & 0 & 0 & 0 & 0 & 0 & 0 & 0 & 0 & 0 & 0 & 0 & 0\\
0 & 0 & 0 & 0 & 0 & 0 & 0 & 0 & 0 & 0 & 0 & 0 & 0 & 0 & \textbf{1} & \textbf{0} & 0 & 0 & 0 & 0 & 0 & 0 & 0 & 0 & 0 & 0 & 0 & 0 & 0 & 0 & 0\\
0 & 0 & 0 & 0 & 0 & 0 & 0 & 0 & 0 & 0 & 0 & 0 & 0 & 0 & 0 & \textbf{0} & \textbf{1} & 0 & 0 & 0 & 0 & 0 & 0 & 0 & 0 & 0 & 0 & 0 & 0 & 0 & 0\\
0 & 0 & 0 & 0 & 0 & 0 & 0 & 0 & 0 & 0 & 0 & 0 & 0 & 0 & 0 & \textbf{0} & 0 & 0 & \textbf{1} & 0 & 0 & 0 & 0 & 0 & 0 & 0 & 0 & 0 & 0 & 0 & 0\\
0 & 0 & 0 & 0 & 0 & 0 & 0 & 0 & 0 & 0 & 0 & 0 & 0 & 0 & 0 & \textbf{0} & 0 & 0 & 0 & 0 & 0 & 0 & 0 & 0 & 0 & 0 & 0 & 0 & 0 & 0 & 0\\
0 & 0 & 0 & 0 & 0 & 0 & 0 & 0 & 0 & 0 & 0 & 0 & 0 & 0 & 0 & \textbf{0} & 0 & 0 & 0 & 0 & 0 & 0 & 0 & 0 & 0 & 0 & 0 & 0 & 0 & 0 & 0\\
0 & 0 & 0 & 0 & 0 & 0 & 0 & 0 & 0 & 0 & 0 & 0 & 0 & 0 & 0 & \textbf{0} & 0 & 0 & 0 & 0 & 0 & 0 & 0 & 0 & 0 & 0 & 0 & 0 & 0 & 0 & 0\\
0 & 0 & 0 & 0 & 0 & 0 & 0 & 0 & 0 & 0 & 0 & 0 & 0 & 0 & 0 & \textbf{0} & 0 & 0 & 0 & 0 & 0 & 0 & 0 & 0 & 0 & 0 & 0 & 0 & 0 & 0 & 0\\
0 & 0 & 0 & 0 & 0 & 0 & 0 & 0 & 0 & 0 & 0 & 0 & 0 & 0 & 0 & \textbf{0} & 0 & 0 & 0 & 0 & 0 & 0 & 0 & 0 & 0 & 0 & 0 & 0 & 0 & 0 & 0\\
0 & 0 & 0 & 0 & 0 & 0 & 0 & 0 & 0 & 0 & 0 & 0 & 0 & 0 & 0 & \textbf{0} & 0 & 0 & 0 & 0 & 0 & 0 & 0 & 0 & 0 & 0 & 0 & 0 & 0 & 0 & 0\\
0 & 0 & 0 & 0 & 0 & 0 & 0 & 0 & 0 & 0 & 0 & 0 & 0 & 0 & 0 & \textbf{0} & 0 & 0 & 0 & 0 & 0 & 0 & 0 & 0 & 0 & 0 & 0 & 0 & 0 & 0 & 0\\
0 & 0 & 0 & 0 & 0 & 0 & 0 & 0 & 0 & 0 & 0 & 0 & 0 & 0 & 0 & \textbf{0} & 0 & 0 & 0 & 0 & 0 & 0 & 0 & 0 & 0 & 0 & 0 & 0 & 0 & 0 & 0\\
0 & 0 & 0 & \textbf{1} & 0 & 0 & 0 & 0 & 0 & 0 & 0 & 0 & 0 & 0 & 0 & \textbf{0} & 0 & 0 & 0 & 0 & 0 & 0 & 0 & 0 & 0 & 0 & 0 & 0 & 0 & 0 & 0\\
\textbf{1} & 0 & 0 & 0 & 0 & 0 & 0 & 0 & 0 & 0 & 0 & 0 & 0 & 0 & 0 & \textbf{0} & 0 & 0 & 0 & 0 & 0 & 0 & 0 & 0 & 0 & 0 & 0 & 0 & 0 & 0 & 0\\
0 & 0 & \textbf{1} & 0 & 0 & 0 & 0 & 0 & 0 & 0 & 0 & 0 & 0 & 0 & 0 & \textbf{0} & 0 & 0 & 0 & 0 & 0 & 0 & 0 & 0 & 0 & 0 & 0 & 0 & 0 & \textbf{1} & 0\\
\textbf{0} & \textbf{0} & \textbf{0} & \textbf{0} & \textbf{0} & \textbf{0} & \textbf{0} & \textbf{0} & \textbf{0} & \textbf{0} & \textbf{0} & \textbf{0} & \textbf{0} & \textbf{0} & \textbf{0} & \textbf{0} & \textbf{0} & \textbf{0} & \textbf{0} & \textbf{0} & \textbf{0} & \textbf{0} & \textbf{0} & \textbf{0} &  \textbf{0} & \textbf{0} & \textbf{0} & \textbf{0} &  \textbf{0} & \textbf{0} & \textbf{0}\\
0 & \textbf{1} & 0 & 0 & 0 & 0 & 0 & 0 & 0 & 0 & 0 & 0 & 0 & 0 & 0 & \textbf{0} & 0 & 0 & 0 & 0 & 0 & 0 & 0 & 0 & 0 & 0 & 0 & 0 & \textbf{1} & 0 & 0\\
0 & 0 & 0 & 0 & 0 & 0 & 0 & 0 & 0 & 0 & 0 & 0 & 0 & 0 & 0 & \textbf{0} & 0 & 0 & 0 & 0 & 0 & 0 & 0 & 0 & 0 & 0 & 0 & 0 & 0 & 0 & \textbf{1}\\
0 & 0 & 0 & 0 & 0 & 0 & 0 & 0 & 0 & 0 & 0 & 0 & 0 & 0 & 0 & \textbf{0} & 0 & 0 & 0 & 0 & 0 & 0 & 0 & 0 & 0 & 0 & 0 & \textbf{1} & 0 & 0 & 0\\
0 & 0 & 0 & 0 & 0 & 0 & 0 & 0 & 0 & 0 & 0 & 0 & 0 & 0 & 0 & \textbf{0} & 0 & 0 & 0 & 0 & 0 & 0 & 0 & 0 & 0 & 0 & 0 & 0 & 0 & 0 & 0\\
0 & 0 & 0 & 0 & 0 & 0 & 0 & 0 & 0 & 0 & 0 & 0 & 0 & 0 & 0 & \textbf{0} & 0 & 0 & 0 & 0 & 0 & 0 & 0 & 0 & 0 & 0 & 0 & 0 & 0 & 0 & 0\\
0 & 0 & 0 & 0 & 0 & 0 & 0 & 0 & 0 & 0 & 0 & 0 & 0 & 0 & 0 & \textbf{0} & 0 & 0 & 0 & 0 & 0 & 0 & 0 & 0 & 0 & 0 & 0 & 0 & 0 & 0 & 0\\
0 & 0 & 0 & 0 & 0 & 0 & 0 & 0 & 0 & 0 & 0 & 0 & 0 & 0 & 0 & \textbf{0} & 0 & 0 & 0 & 0 & 0 & 0 & 0 & 0 & 0 & 0 & 0 & 0 & 0 & 0 & 0\\
0 & 0 & 0 & 0 & 0 & 0 & 0 & 0 & 0 & 0 & 0 & 0 & 0 & 0 & 0 & \textbf{0} & 0 & 0 & 0 & 0 & 0 & 0 & 0 & 0 & 0 & 0 & 0 & 0 & 0 & 0 & 0\\
0 & 0 & 0 & 0 & 0 & 0 & 0 & 0 & 0 & 0 & 0 & 0 & 0 & 0 & 0 & \textbf{0} & 0 & 0 & 0 & 0 & 0 & 0 & 0 & 0 & 0 & 0 & 0 & 0 & 0 & 0 & 0\\
0 & 0 & 0 & 0 & 0 & 0 & 0 & 0 & 0 & 0 & 0 & 0 & 0 & 0 & 0 & \textbf{0} & 0 & 0 & 0 & 0 & 0 & 0 & 0 & 0 & 0 & 0 & 0 & 0 & 0 & 0 & 0\\
0 & 0 & 0 & 0 & 0 & 0 & 0 & 0 & 0 & 0 & 0 & 0 & 0 & 0 & 0 & \textbf{0} & 0 & 0 & 0 & 0 & 0 & 0 & 0 & 0 & 0 & 0 & 0 & 0 & 0 & 0 & 0\\
0 & 0 & 0 & 0 & 0 & 0 & 0 & 0 & 0 & 0 & 0 & 0 & \textbf{1} & 0 & 0 & \textbf{0} & 0 & 0 & 0 & 0 & 0 & 0 & 0 & 0 & 0 & 0 & 0 & 0 & 0 & 0 & 0\\
0 & 0 & 0 & 0 & 0 & 0 & 0 & 0 & 0 & 0 & 0 & 0 & 0 & 0 & \textbf{1} & \textbf{0} & 0 & 0 & 0 & 0 & 0 & 0 & 0 & 0 & 0 & 0 & 0 & 0 & 0 & 0 & 0\\
0 & 0 & 0 & 0 & 0 & 0 & 0 & 0 & 0 & 0 & 0 & 0 & 0 & 0 & 0 & \textbf{0} & \textbf{1} & 0 & 0 & 0 & 0 & 0 & 0 & 0 & 0 & 0 & 0 & 0 & 0 & 0 & 0\\
0 & 0 & 0 & 0 & 0 & 0 & 0 & 0 & 0 & 0 & 0 & 0 & 0 & \textbf{1} & 0 & \textbf{0} & 0 & 0 & 0 & 0 & 0 & 0 & 0 & 0 & 0 & 0 & 0 & 0 & 0 & 0 & 0
\end{bmatrix*}
\]
         \caption{$T_Q$}
         \label{fig:y equals x}
     \end{subfigure}
     \hfill
  
  \begin{subfigure}[b]{0.2\textwidth}
       \tiny
\[
\begin{bmatrix*}[r]
0 & 0 & 0 & 0 & 0 & 0 & 0 & 0 & 0 & 0 & 0 & 0 & 0 & 0 & 0 & 0 & 0 & \textbf{1} & \textbf{0} & 0 & 0 & 0 & 0 & 0 & 0 & 0 & 0 & 0 & 0 & 0 & 0 & 0 & 0 & 0 & 0 & 0 & 0\\
0 & 0 & 0 & 0 & 0 & 0 & 0 & 0 & 0 & 0 & 0 & 0 & 0 & 0 & \textbf{1} & 0 & 0 & 0 & \textbf{0} & 0 & 0 & 0 & 0 & 0 & 0 & 0 & 0 &  0 & 0 & 0 & 0 & 0 & 0 & 0 & 0 & 0 & 0\\
0 & 0 & 0 & 0 & 0 & 0 & 0 & 0 & 0 & 0 & 0 & 0 & 0 & 0 & 0 & 0 & \textbf{1} & 0 & \textbf{0} & 0 & 0 & 0 & 0 & 0 & 0 & 0 & 0 & 0 & 0 & 0 & 0 & 0 & 0 & 0 & 0 & 0 & 0\\
0 & 0 & 0 & 0 & 0 & 0 & 0 & 0 & 0 & 0 & 0 & 0 & 0 & 0 & 0 & 0 & 0 & 0 & \textbf{0} & \textbf{1} & 0 & 0 & 0 & 0 & 0 & 0 & 0 & 0 & 0 & 0 & 0 & 0 & 0 & 0 & 0 & 0 & 0\\
0 & 0 & 0 & 0 & 0 & 0 & 0 & 0 & 0 & 0 & 0 & 0 & 0 & 0 & 0 & \textbf{1} & 0 & 0 & \textbf{0} & 0 & 0 & 0 & 0 & 0 & 0 & 0 & 0 & 0 & 0 & 0 & 0 & 0 & 0 & 0 & 0 & 0 & 0\\
0 & 0 & 0 & 0 & 0 & 0 & 0 & 0 & 0 & 0 & 0 & 0 & 0 & 0 & 0 & 0 & 0 & 0 & \textbf{0} & 0 & 0 & 0 & 0 & 0 & 0 & 0 & 0 & 0 & 0 & 0 & 0 & 0 & 0 & 0 & 0 & 0 & 0\\
0 & 0 & 0 & 0 & 0 & 0 & 0 & 0 & 0 & 0 & 0 & 0 & 0 & 0 & 0 & 0 & 0 & 0 & \textbf{0} & 0 & 0 & 0 & 0 & 0 & 0 & 0 & 0 & 0 & 0 & 0 & 0 & 0 & 0 & 0 & 0 & 0 & 0\\
0 & 0 & 0 & 0 & 0 & 0 & 0 & 0 & 0 & 0 & 0 & 0 & 0 & 0 & 0 & 0 & 0 & 0 & \textbf{0} & 0 & 0 & 0 & 0 & 0 & 0 & 0 & 0 & 0 & 0 & 0 & 0 & 0 & 0 & 0 & 0 & 0 & 0\\
0 & 0 & 0 & 0 & 0 & 0 & 0 & 0 & 0 & 0 & 0 & 0 & 0 & 0 & 0 & 0 & 0 & 0 & \textbf{0} & 0 & 0 & 0 & 0 & 0 & 0 & 0 & 0 & 0 & 0 & 0 & 0 & 0 & 0 & 0 & 0 & 0 & 0\\
0 & 0 & 0 & 0 & 0 & 0 & 0 & 0 & 0 & 0 & 0 & 0 & 0 & 0 & 0 & 0 & 0 & 0 & \textbf{0} & 0 & 0 & 0 & 0 & 0 & 0 & 0 & 0 & 0 & 0 & 0 & 0 & 0 & 0 & 0 & 0 & 0 & 0\\
0 & 0 & 0 & 0 & 0 & 0 & 0 & 0 & 0 & 0 & 0 & 0 & 0 & 0 & 0 & 0 & 0 & 0 & \textbf{0} & 0 & 0 & 0 & 0 & 0 & 0 & 0 & 0 & 0 & 0 & 0 & 0 & 0 & 0 & 0 & 0 & 0 & 0\\
0 & 0 & 0 & 0 & 0 & 0 & 0 & 0 & 0 & 0 & 0 & 0 & 0 & 0 & 0 & 0 & 0 & 0 & \textbf{0} & 0 & 0 & 0 & 0 & 0 & 0 & 0 & 0 & 0 & 0 & 0 & 0 & 0 & 0 & 0 & 0 & 0 & 0\\
0 & 0 & 0 & 0 & 0 & 0 & 0 & 0 & 0 & 0 & 0 & 0 & 0 & 0 & 0 & 0 & 0 & 0 & \textbf{0} & 0 & 0 & 0 & 0 & 0 & 0 & 0 & 0 & 0 & 0 & 0 & 0 & 0 & 0 & 0 & 0 & 0 & 0\\
0 & 0 & 0 & 0 & 0 & 0 & 0 & 0 & 0 & 0 & 0 & 0 & 0 & 0 & 0 & 0 & 0 & 0 & \textbf{0} & 0 & 0 & 0 & 0 & 0 & 0 & 0 & 0 & 0 & 0 & 0 & 0 & 0 & 0 & 0 & 0 & 0 & 0\\
0 & 0 & 0 & 0 & 0 & 0 & 0 & 0 & 0 & 0 & 0 & 0 & 0 & 0 & 0 & 0 & 0 & 0 & \textbf{0} & 0 & 0 & 0 & 0 & 0 & 0 & 0 & 0 & 0 & 0 & 0 & 0 & 0 & 0 & 0 & 0 & 0 & 0\\
0 & 0 & 0 & \textbf{1} & 0 & 0 & 0 & 0 & 0 & 0 & 0 & 0 & 0 & 0 & 0 & 0 & 0 & 0 & \textbf{0} & 0 & 0 & 0 & 0 & 0 & 0 & 0 & 0 & 0 & 0 & 0 & 0 & 0 & 0 & 0 & 0 & 0 & 0\\
\textbf{1} & 0 & 0 & 0 & 0 & 0 & 0 & 0 & 0 & 0 & 0 & 0 & 0 & 0 & 0 & 0 & 0 & 0 & \textbf{0} & 0 & 0 & 0 & 0 & 0 & 0 & 0 & 0 & 0 & 0 & 0 & 0 & 0 & 0 & 0 & 0 & 0 & 0\\
0 & 0 & \textbf{1} & 0 & 0 & 0 & 0 & 0 & 0 & 0 & 0 & 0 & 0 & 0 & 0 & 0 & 0 & 0 & \textbf{0} & 0 & 0 & 0 & 0 & 0 & 0 & 0 & 0 & 0 & 0 & 0 & 0 & 0 & 0 & 0 & \textbf{1} & 0 & 0\\
\textbf{0} & \textbf{0} & \textbf{0} & \textbf{0} & \textbf{0} & \textbf{0} & \textbf{0} & \textbf{0} & \textbf{0} & \textbf{0} & \textbf{0} & \textbf{0} & \textbf{0} & \textbf{0} & \textbf{0} & \textbf{0} & \textbf{0} & \textbf{0} & \textbf{0} & \textbf{0} & \textbf{0} & \textbf{0} & \textbf{0} & \textbf{0} &  \textbf{0} & \textbf{0} & \textbf{0}& \textbf{0} & \textbf{0} & \textbf{0} & \textbf{0} & \textbf{0} & \textbf{0} & \textbf{0} &  \textbf{0} & \textbf{0} & \textbf{0}\\
0 & \textbf{1} & 0 & 0 & 0 & 0 & 0 & 0 & 0 & 0 & 0 & 0 & 0 & 0 & 0 & 0 & 0 & 0 & \textbf{0} & 0 & 0 & 0 & 0 & 0 & 0 & 0 & 0 & 0 & 0 & 0 & 0 & 0 & 0 & \textbf{1} & 0 & 0 & 0\\
0 & 0 & 0 & 0 & \textbf{1} & 0 & 0 & 0 & 0 & 0 & 0 & 0 & 0 & 0 & 0 & 0 & 0 & 0 & \textbf{0} & 0 & 0 & 0 & 0 & 0 & 0 & 0 & 0 & 0 & 0 & 0 & 0 & 0 & 0 & 0 & 0 & 0 & 0\\
0 & 0 & 0 & 0 & 0 & 0 & 0 & 0 & 0 & 0 & 0 & 0 & 0 & 0 & 0 & 0 & 0 & 0 & \textbf{0} & 0 & 0 & 0 & 0 & 0 & 0 & 0 & 0 & 0 & 0 & 0 & 0 & 0 & 0 & 0 & 0 & 0 & \textbf{1}\\
0 & 0 & 0 & 0 & 0 & 0 & 0 & 0 & 0 & 0 & 0 & 0 & 0 & 0 & 0 & 0 & 0 & 0 & \textbf{0} & 0 & 0 & 0 & 0 & 0 & 0 & 0 & 0 & 0 & 0 & 0 & 0 & 0 & \textbf{1} & 0 & 0 & 0 & 0\\
0 & 0 & 0 & 0 & 0 & 0 & 0 & 0 & 0 & 0 & 0 & 0 & 0 & 0 & 0 & 0 & 0 & 0 & \textbf{0} & 0 & 0 & 0 & 0 & 0 & 0 & 0 & 0 & 0 & 0 & 0 & 0 & 0 & 0 & 0 & 0 & \textbf{1} & 0\\
0 & 0 & 0 & 0 & 0 & 0 & 0 & 0 & 0 & 0 & 0 & 0 & 0 & 0 & 0 & 0 & 0 & 0 & \textbf{0} & 0 & 0 & 0 & 0 & 0 & 0 & 0 & 0 & 0 & 0 & 0 & 0 & 0 & 0 & 0 & 0 & 0 & 0\\
0 & 0 & 0 & 0 & 0 & 0 & 0 & 0 & 0 & 0 & 0 & 0 & 0 & 0 & 0 & 0 & 0 & 0 & \textbf{0} & 0 & 0 & 0 & 0 & 0 & 0 & 0 & 0 & 0 & 0 & 0 & 0 & 0 & 0 & 0 & 0 & 0 & 0\\
0 & 0 & 0 & 0 & 0 & 0 & 0 & 0 & 0 & 0 & 0 & 0 & 0 & 0 & 0 & 0 & 0 & 0 & \textbf{0} & 0 & 0 & 0 & 0 & 0 & 0 & 0 & 0 & 0 & 0 & 0 & 0 & 0 & 0 & 0 & 0 & 0 & 0\\
0 & 0 & 0 & 0 & 0 & 0 & 0 & 0 & 0 & 0 & 0 & 0 & 0 & 0 & 0 & 0 & 0 & 0 & \textbf{0} & 0 & 0 & 0 & 0 & 0 & 0 & 0 & 0 & 0 & 0 & 0 & 0 & 0 & 0 & 0 & 0 & 0 & 0\\
0 & 0 & 0 & 0 & 0 & 0 & 0 & 0 & 0 & 0 & 0 & 0 & 0 & 0 & 0 & 0 & 0 & 0 & \textbf{0} & 0 & 0 & 0 & 0 & 0 & 0 & 0 & 0 & 0 & 0 & 0 & 0 & 0 & 0 & 0 & 0 & 0 & 0\\
0 & 0 & 0 & 0 & 0 & 0 & 0 & 0 & 0 & 0 & 0 & 0 & 0 & 0 & 0 & 0 & 0 & 0 & \textbf{0} & 0 & 0 & 0 & 0 & 0 & 0 & 0 & 0 & 0 & 0 & 0 & 0 & 0 & 0 & 0 & 0 & 0 & 0\\
0 & 0 & 0 & 0 & 0 & 0 & 0 & 0 & 0 & 0 & 0 & 0 & 0 & 0 & 0 & 0 & 0 & 0 & \textbf{0} & 0 & 0 & 0 & 0 & 0 & 0 & 0 & 0 & 0 & 0 & 0 & 0 & 0 & 0 & 0 & 0 & 0 & 0\\
0 & 0 & 0 & 0 & 0 & 0 & 0 & 0 & 0 & 0 & 0 & 0 & 0 & 0 & 0 & 0 & 0 & 0 & \textbf{0} & 0 & 0 & 0 & 0 & 0 & 0 & 0 & 0 & 0 & 0 & 0 & 0 & 0 & 0 & 0 & 0 & 0 & 0\\
0 & 0 & 0 & 0 & 0 & 0 & 0 & 0 & 0 & 0 & 0 & 0 & \textbf{1} & 0 & 0 & 0 & 0 & 0 & \textbf{0} & 0 & 0 & 0 & 0 & 0 & 0 & 0 & 0 & 0 & 0 & 0 & 0 & 0 & 0 & 0 & 0 & 0 & 0\\
0 & 0 & 0 & 0 & 0 & 0 & 0 & 0 & 0 & 0 & 0 & 0 & 0 & 0 & 0 & 0 & 0 & \textbf{1} & \textbf{0} & 0 & 0 & 0 & 0 & 0 & 0 & 0 & 0 & 0 & 0 & 0 & 0 & 0 & 0 & 0 & 0 & 0 & 0\\
0 & 0 & 0 & 0 & 0 & 0 & 0 & 0 & 0 & 0 & 0 & 0 & 0 & 0 & 0 & 0 & 0 & 0 & \textbf{0} & 0 & \textbf{1} & 0 & 0 & 0 & 0 & 0 & 0 & 0 & 0 & 0 & 0 & 0 & 0 & 0 & 0 & 0 & 0\\
0 & 0 & 0 & 0 & 0 & 0 & 0 & 0 & 0 & 0 & 0 & 0 & 0 & \textbf{1} & 0 & 0 & 0 & 0 & \textbf{0} & 0 & 0 & 0 & 0 & 0 & 0 & 0 & 0 & 0 & 0 & 0 & 0 & 0 & 0 & 0 & 0 & 0 & 0\\
0 & 0 & 0 & 0 & 0 & 0 & 0 & 0 & 0 & 0 & 0 & 0 & 0 & 0 & 0 & 0 & 0 & 0 & \textbf{0} & \textbf{1} & 0 & 0 & 0 & 0 & 0 & 0 & 0 & 0 & 0 & 0 & 0 & 0 & 0 & 0 & 0 & 0 & 0
\end{bmatrix*}
\]

         \caption{$T_R$}
         \label{fig:five over x}
     \end{subfigure}
   
        \caption{$T_Q$ and $T_R$, with middle rows and middle columns and all ones in bold}
        \label{fig:tq}
\end{figure}

\normalsize

We need to prove that $T_P$ avoids $P$ if $P$ is ordinary. Suppose that $T_P$ contains $P$, in order to show that $P$ is not ordinary. It is impossible for the copy of $P$ to be fully contained in a single section of $T_P$, since each section has too few ones. Thus the copy of $P$ must be contained in at least two sections of $T_P$. If the copy of $P$ is contained in exactly two sections of $T_P$, then there are six possibilities. If $P$ is contained in $P_N$ and $P_E$, $P_E$ and $P_S$, $P_S$ and $P_W$, or $P_W$ and $P_N$, then $P$ reduces to Class 1, so $P$ is not ordinary.

Suppose that the copy of $P$ is on $P_W$ and $P_E$ (the proof is analogous for $P_N$ and $P_S$). If the leftmost or rightmost one in $P$ is also the top or bottom one in $P$, then $P$ reduces to Class 1, so $P$ is not ordinary. So, we may assume that the leftmost and rightmost ones in $P$ are not the top or bottom ones in $P$. If the copy of $P$ only had a single one in $P_E$ (resp. $P_W$), then the copy of $P$ would have to use all of the ones in $P_W$ (resp. $P_E$). However, the leftmost or rightmost one would be out of position by how we constructed $T_P$, if the leftmost and rightmost ones in $P$ are not the top or bottom ones in $P$. So there is no copy of $P$ on $P_W$ and $P_E$ with only a single one in $P_E$ or $P_W$, if the leftmost and rightmost ones in $P$ are not the top or bottom ones in $P$. It remains to consider the cases when the copy of $P$ on $P_W$ and $P_E$ has multiple ones in both $P_W$ and $P_E$. If the copy of $P$ has ones in $P_E$ above the rows that contain $P_W$ and below the rows that contain $P_W$, then $P$ is a Class 2 permutation matrix, so $P$ is not ordinary. If the copy of $P$ has ones in $P_E$ above the rows that contain $P_W$, but not below the rows that contain $P_W$, then $P$ reduces to Class 1, so $P$ is not ordinary. If the copy of $P$ has ones in $P_E$ below the rows that contain $P_W$, but not above the rows that contain $P_W$, then $P$ is in Class 1, so $P$ is not ordinary. Finally if the copy of $P$ had ones in $P_E$ only in the rows that contain $P_W$, then the copy of $P$ would only have the two ones in $P_E$ that are adjacent to the middle row of $T_P$, so $P$ would reduce to Class 1 or Class 2. However, two of the ones in $P_W$ are adjacent to the middle row of $T_P$, so there are not enough remaining ones in $P_W$ to form a copy of $P$. So, there is no copy of $P$ on $P_W$ and $P_E$ with ones only in the rows that contain $P_W$. 

If the copy of $P$ is contained in exactly $3$ sections (which will always have two opposite sections and a \emph{middle} section between them), then $P$ reduces to a Class 2 permutation matrix unless the copy of $P$ only has a single one in the middle section. Suppose that the copy of $P$ only has a single one in the middle section. Let $P'$ be the matrix obtained from $P$ by removing the row and column containing the one in the middle section. Suppose that the copy of $P'$ is on $P_W$ and $P_E$ (the proof is analogous for $P_N$ and $P_S$). If the copy of $P'$ only has a single one in $P_E$ or $P_W$, then $P$ must reduce to Class 1 or Class 3. Now suppose the copy of $P'$ has multiple ones in both $P_W$ and $P_E$. If the copy of $P'$ has ones in $P_E$ above the rows that contain $P_W$ and below the rows that contain $P_W$, then $P'$ and $P$ are in Class 2, so $P$ is not ordinary. If the copy of $P'$ has ones in $P_E$ above the rows that contain $P_W$, but not below the rows that contain $P_W$, then $P'$ and $P$ reduce to Class 1, so $P$ is not ordinary. If the copy of $P'$ has ones in $P_E$ below the rows that contain $P_W$, but not above the rows that contain $P_W$, then $P'$ and $P$ are in Class 1, so $P$ is not ordinary. If the copy of $P'$ has ones in $P_E$ only in the rows that contain $P_W$, then the copy of $P'$ only has the two ones in $P_E$ that are adjacent to the middle row of $T_P$, so both $P'$ and $P$ reduce to Class 1 or Class 2. 

Finally if the copy of $P$ has ones in all four sections, then $P$ is a Class 4 permutation matrix. Thus, in all possible cases $P$ is not ordinary.

In order to see that $\sat(n, P) = O(1)$ for every ordinary permutation matrix $P$, observe that changing a zero to a one in the middle row or the middle column of $T'_P$ will create a copy of $P$. For any integer $j > 0$, we can add $j$ rows of zeroes on the middle row of $T'_P$ and $j$ columns of zeroes on the middle column of $T'_P$. The resulting 0-1 matrix must still be saturating for $P$.
\end{proof}

The next lemma completes the proof of Theorem \ref{mainth}.

\begin{lem}\label{almostall}
Almost all $k \times k$ permutation matrices $P$ are ordinary.
\end{lem}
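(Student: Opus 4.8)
The plan is to bound the number of $k\times k$ permutation matrices that are \emph{not} ordinary and to show this count is $o(k!)$; since there are exactly $k!$ permutation matrices of order $k$, that proves the lemma. The transformations obtainable as sequences of vertical/horizontal reflections and $90$-degree rotations form the dihedral group of order $8$, and each of these $8$ transformations is a bijection of the set of $k\times k$ permutation matrices onto itself. Hence a permutation matrix of order $k$ fails to be ordinary exactly when at least one of its $8$ images lies in Class~1, Class~2, Class~3, or Class~4, and the union bound gives
\[
\#\{k\times k\text{ non-ordinary permutation matrices}\}\ \le\ 8\bigl(N_1(k)+N_2(k)+N_3(k)+N_4(k)\bigr),
\]
where $N_i(k)$ is the number of $k\times k$ permutation matrices in Class~$i$. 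So it suffices to prove $N_i(k)=O((k-1)!)$ for $i=1,2,3,4$, as then the left side is $O((k-1)!)=o(k!)$.

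To bound $N_i(k)$ I would count the matrices of Class~$i$ directly from their block description. Because the row sets $R_1,R_2,\dots$ and the column sets $C_1,C_2,\dots$ are required to be consecutive and to appear in the stated order, each is determined as a set of indices by its size, and the prescribed occupancy pattern together with the number $e\in\{0,1,2\}$ of ``exceptional'' ones (in Classes~1 and~2) forces the column-block sizes to be explicit functions of the row-block sizes. Fix such a \emph{profile}. A matrix realizing that profile is obtained by choosing which row(s) carry the exceptional one(s), which column(s) those ones occupy, and (in Classes~2 and~4) how a middle block splits between its two neighbours---altogether at most $k^{O(1)}$ choices, of bounded degree in $k$---and then by choosing independently one bijection within each remaining block, which contributes a product of factorials of the block sizes. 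Multiplying and summing over all profiles, one finds that $N_i(k)$ is at most a constant times a sum $\sum_j j!\,(M-j)!$, where $M\le k+2$ (the surplus $M-k$ arising when the bounded-degree polynomial prefactor is absorbed into an adjacent factorial), and where the nonemptiness of all blocks---together with, in Classes~1,~2,~3, the requirement of at least two ones in a designated block---confines $j$ to the range $[\,M-k+1,\ k-1\,]$. The function $j\mapsto j!\,(M-j)!$ is log-convex, so over that range its maximum is attained at an endpoint, where it is $O((k-1)!)$ since $M-k=O(1)$; it decays geometrically as $j$ leaves an endpoint, and the $\Theta(k)$ terms near the middle of the range are superexponentially smaller than $(k-1)!$. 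Hence each such sum is $O((k-1)!)$, which yields $N_1(k),N_2(k),N_4(k)=O((k-1)!)$. For Class~3, every element is obtained from some Class~2 matrix of the same order by deleting a designated row and reinserting it at the top, so $N_3(k)\le N_2(k)=O((k-1)!)$.

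The conceptual content above is short; I expect the real work, and the one place an estimate can go wrong, to be the bookkeeping for Classes~1 and~2, which split into the sub-cases $e=0,1,2$. In each sub-case one must track exactly how the column-block sizes depend on the row-block sizes and on $e$, and then absorb the polynomial prefactor into a factorial with care: a bijection within a block of size $c-1$ contributes $(c-1)!$, which should be used to turn a factor $c$ of the prefactor into $c!$, rather than being crudely bounded above by $c!$. Done this way, the resulting composition is of some $M\le k+2$ into parts each at least $M-k+1$---exactly the configuration that makes $\sum_j j!\,(M-j)!=O((k-1)!)$---whereas a careless absorption leaves a spurious factor of $k$ and a contribution of order $k!$. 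Once the handful of sub-cases (and the analogous, easier count for Class~4) are verified with this care, the lemma follows, and together with the preceding theorem it completes the proof of Theorem~\ref{mainth}.
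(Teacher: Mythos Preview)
Your proposal is correct and follows essentially the same approach as the paper: both reduce to showing that each of Classes~1--4 contains $o(k!)$ permutation matrices by fixing the block sizes, bounding the number of choices by a product of factorials times a polynomial prefactor, and summing over profiles (with Class~3 handled via the map from Class~2). The only cosmetic difference is that you package the final estimate uniformly via the log-convexity of $j\mapsto j!\,(M-j)!$ after absorbing the polynomial prefactor, whereas the paper carries out the same summations by explicit case analysis on the extreme values of $j$; both routes yield the $O((k-1)!)$ bound you state.
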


\begin{proof}
It suffices to prove that the number of permutation matrices in Classes 1, 2, 3, and 4 is $o(k!)$. For Class 1, we split the bound into three parts. First, consider Class 1 permutation matrices with no ones in the submatrix restricted to $R_1 \times C_2$. The number of these matrices with $|R_1| = j$ is $j! (k-j)!$ since there are $j!$ possibilities for the entries in $R_1 \times C_1$ and $(k-j)!$ possibilities for the entries in $R_2 \times C_2$. When $j = 1$ or $j = k-1$, $j!(k-j)! = (k-1)! = o(k!)$. When $2 \le j \le k-2$, $j!(k-j)! = O(\frac{k!}{k^2})$, so $\sum_{j = 2}^{k-2} j!(k-j)! = o(k!)$. Thus the total number of permutation matrices in Class 1 with no ones in the submatrix restricted to $R_1 \times C_2$ is $o(k!)$.

Next, consider Class 1 permutation matrices with a single one in the submatrix restricted to $R_1 \times C_2$. The number of these matrices with $|R_1| = j+1$ is at most $(j+1) j! (k-j)!$. We must have $j \le k-2$ since $j+1 < k$. When $j = 1$ or $j = k-2$, $(j+1)j!(k-j)! = o(k!)$. When $2 \le j \le k-3$, $(j+1)j!(k-j)! = O(\frac{k!}{k^2})$, so $\sum_{j = 2}^{k-3} (j+1)j!(k-j)! = o(k!)$. Thus the total number of permutation matrices in Class 1 with a single one in the submatrix restricted to $R_1 \times C_2$ is $o(k!)$.

Next, consider Class 1 permutation matrices with two ones in the submatrix restricted to $R_1 \times C_2$. The number of these matrices with $|R_1| = j+2$ is at most $(j+1) j! (k-j)!$. We must have $j \le k-3$ since $j+2 < k$. When $j = 1$, $(j+1)j!(k-j)! = o(k!)$. When $2 \le j \le k-3$, $(j+1)j!(k-j)! = O(\frac{k!}{k^2})$, so $\sum_{j = 2}^{k-3} (j+1)j!(k-j)! = o(k!)$. Thus the total number of permutation matrices in Class 1 with two ones in the submatrix restricted to $R_1 \times C_2$ is $o(k!)$.

For Class 2, we again split the bound into $3$ parts. First we bound the number of permutation matrices in Class 2 with no ones in the submatrix restricted to $R_2 \times C_2$. When $|R_1| = i$ and $|R_2| = j$, there are at most $j!(k-j)!$ permutation matrices in Class 2 with no ones in the submatrix restricted to $R_2 \times C_2$. By definition of Class 2, we must have $2 \le j \le k-2$. When $j = 2$ or $j = k-2$, $j!(k-j)! = 2 (k-2)! = O(\frac{k!}{k^2})$, so $k j!(k-j)! = o(k!)$, covering all possible $i$ for $j = 2$ and $j = k-2$ since $i < k$. When $3 \le j \le k-3$, $j!(k-j)! = O(\frac{k!}{k^3})$, so $\sum_{j = 3}^{k-3} j!(k-j)! = O(\frac{k!}{k^2})$ and $k \sum_{j = 3}^{k-3} j!(k-j)! = o(k!)$, covering all possible $i$ for $3 \le j \le k-3$. Thus the total number of permutation matrices in Class 2 with no ones in the submatrix restricted to $R_2 \times C_2$ is $o(k!)$.

Next we bound the number of permutation matrices in Class 2 with a single one in the submatrix restricted to $R_2 \times C_2$. When $|R_1| = i$ and $|R_2| = j+1$, there are at most $(j+1) j!(k-j)!$ permutation matrices in Class 2 with a single one in the submatrix restricted to $R_2 \times C_2$. By definition of Class 2, we must have $j \ge 2$. When $j = 2$, $(j+1) j!(k-j)! = O(\frac{k!}{k^2})$, so $k (j+1) j!(k-j)! = o(k!)$, which covers all possible $i$ for $j = 2$. We cannot have $j = k-2$, since $i > 0$ and $i + (j+1) < k$.  When $j = k-3$, $(j+1) j!(k-j)! = o(k!)$ and $i = 1$ since $i > 0$ and $i + (j+1) < k$. When $3 \le j \le k-4$, $(j+1)j!(k-j)! = O(\frac{k!}{k^3})$, so $\sum_{j = 3}^{k-4} (j+1)j!(k-j)! = O(\frac{k!}{k^2})$ and $k \sum_{j = 3}^{k-4} (j+1)j!(k-j)! = o(k!)$, which covers all possible $i$ for $3 \le j \le k-4$. Thus the total number of permutation matrices in Class 2 with a single one in the submatrix restricted to $R_2 \times C_2$ is $o(k!)$.

Next we bound the number of permutation matrices in Class 2 with two ones in the submatrix restricted to $R_2 \times C_2$. When $|R_1| = i$ and $|R_2| = j+2$, there are at most $(j+1) j!(k-j)!$ permutation matrices in Class 2 with two ones in the submatrix restricted to $R_2 \times C_2$. By definition of Class 2, we must have $j \ge 2$. When $j = 2$, $(j+1) j!(k-j)! = O(\frac{k!}{k^2})$, so $k (j+1) j!(k-j)! = o(k!)$, which covers all possible $i$ for $j = 2$. We cannot have $j = k-2$ or $j = k-3$, since $i > 0$ and $i+(j+2) < k$. When $3 \le j \le k-4$, $(j+1) j!(k-j)! = O(\frac{k!}{k^3})$, so $\sum_{j = 3}^{k-4} (j+1) j!(k-j)! = O(\frac{k!}{k^2})$ and $k \sum_{j = 3}^{k-4} (j+1) j!(k-j)! = o(k!)$, covering all possible $i$ for $3 \le j \le k-4$. Thus the total number of permutation matrices in Class 2 with two ones in the submatrix restricted to $R_2 \times C_2$ is $o(k!)$.

Note that each permutation matrix in Class 3 is created from at least one permutation matrix in Class 2, and each permutation matrix in Class 2 creates only one permutation matrix in Class 3, so the number of permutation matrices in Class 3 is $o(k!)$.

Finally we bound the number of permutation matrices in Class 4. When $|R_2| = j$ for a permutation matrix in Class 4, we must have $|C_2| = k-j$, so the number of permutation matrices in Class 4 with $|R_1| = i$, $|R_2| = j$, and $|C_1| = r$ is at most $j!(k-j)!$. We must have $2 \le |R_2| \le k-2$ for a permutation matrix in Class 4, since both the submatrix restricted to $R_2 \times C_1$ and the submatrix restricted to $R_2 \times C_3$ must have ones, as must the submatrices restricted to $R_1 \times C_2$ and $R_3 \times C_2$. If $j = 2$, then $|C_2| = k-2$, so $r = 1$ and the number of permutation matrices in Class 4 with $|R_2| = 2$ is at most $k 2!(k-2)! = o(k!)$, covering all possible $i$ for $j = 2$. If $j = k-2$, then $i = 1$, so the number of permutation matrices in Class 4 with $|R_2| = k-2$ is at most $k 2!(k-2)! = o(k!)$, covering all possible $r$ for $j = k-2$. If $j = 3$, then $|C_2| = k-3$, so $r = 1$ or $r = 2$ and the number of permutation matrices in Class 4 with $|R_2| = 3$ is at most $2k 3!(k-3)! = o(k!)$, covering all possible $i$ and $r$ for $j = 3$. If $j = k-3$, then $i = 1$ or $i = 2$, so the number of permutation matrices in Class 4 with $|R_2| = k-3$ is at most $2k 3!(k-3)! = o(k!)$, covering all possible $i$ and $r$ for $j = k-3$. If $4 \le j \le k-4$, then $j!(k-j)! = O(\frac{k!}{k^4})$, so $\sum_{j = 4}^{k-4} j! (k-j)! = O(\frac{k!}{k^3})$ and $k^2 \sum_{j = 4}^{k-4} j! (k-j)! = o(k!)$, covering all possible $i$ and $r$ for $4 \le j \le k-4$. Thus the total number of permutation matrices in Class 4 is $o(k!)$.
\end{proof}

\section{Discussion}

We showed that almost all $k \times k$ permutation matrices have bounded saturation functions, but Fulek and Keszegh's problem of characterizing all permutation matrices $P$ for which $\sat(n, P) = O(1)$ is still open. We made some progress on this problem by showing that every ordinary permutation matrix $P$ has $\sat(n, P) = O(1)$. Ordinary permutation matrices are the permutation matrices that do not reduce to any of Class 1, Class 2, Class 3, or Class 4, so the remaining problem is to characterize the permutation matrices $P$ in Classes 1, 2, 3, and 4 for which $\sat(n, P) = O(1)$. Fulek and Keszegh showed that all permutation matrices $P$ in Class 1 with no ones in the submatrix restricted to $R_1 \times C_2$ have $\sat(n, P) = \Theta(n)$ \cite{fk}. What about the rest of Class 1, Class 2, Class 3, and Class 4?

As for specific 0-1 matrices, we have not determined whether $\sat(n, P) = O(1)$ for $P = 
\begin{bmatrix}
0 & 0  & 1 & 0\\
1 & 0 & 0 & 0\\
0 & 0 & 0 & 1\\
0 & 1 & 0 & 0
\end{bmatrix}$. This was another open problem from \cite{fk}. Observe that $P$ is the matrix obtained by removing the middle row and middle column of $Q$. In this case we may construct $T_P$, but $P$ is in Class 4 and $T_P$ contains $P$.

Let $L_k$ denote the number of $k \times k$ permutation matrices $P$ for which $\sat(n, P) = \Theta(n)$. The proof of Lemma \ref{almostall} implies that $L_k = O((k-1)!)$. We also have $L_k = \Omega((k-1)!)$, since Fulek and Keszegh showed that $\sat(n, P) = \Theta(n)$ for all permutation matrices $P$ in Class 1 with no ones in the submatrix restricted to $R_1 \times C_2$. Thus, $L_k = \Theta((k-1)!)$. It would be interesting to sharpen the bounds on $L_k$.

Our construction implies that $\sat(n, P) = O(k^2)$ for any ordinary $k \times k$ permutation matrix $P$. Another interesting problem would be to bound the maximum possible value of $\sat(n, P)$ for any $k \times k$ permutation matrix $P$ with $\sat(n, P) = O(1)$. A more general open problem from Fulek and Keszegh is to determine whether there exists a decision procedure to answer whether $\sat(n, P) = O(1)$ for any given 0-1 matrix $P$ \cite{fk}. An approach would be to bound the maximum possible value of $\sat(n, P)$ for any $k \times k$ 0-1 matrix $P$ with $\sat(n, P) = O(1)$. It would also be interesting to investigate the complexity of computing $\sat(n, P)$ and determining whether $\sat(n, P) \leq m$, both in general and for special families of 0-1 matrices $P$ such as permutation matrices.

Finally, a natural direction for future research is to investigate saturation functions of forbidden $d$-dimensional 0-1 matrices. Extremal functions of forbidden $d$-dimensional 0-1 matrices have been investigated in \cite{ck, ff0, gt, km, gkst}. As with the $2$-dimensional case, we say that a $d$-dimensional 0-1 matrix $A$ is \emph{saturating} for the forbidden $d$-dimensional 0-1 matrix $P$ if $A$ avoids $P$ but changing any zero to a one in $A$ creates a copy of $P$. For any $d$-dimensional 0-1 matrix $P$, we define the saturation function $\sat(n, P, d)$ to be the minimum possible number of ones in a $d$-dimensional 0-1 matrix of dimensions $n \times \dots \times n$ that is saturating for $P$. 

Given any $2$-dimensional 0-1 matrix $P$ of dimensions $r \times s$, we can create a $d$-dimensional 0-1 matrix $P_d$ corresponding to $P$ for which all dimensions are of length $1$ except the first and second dimensions which are of length $r$ and $s$ respectively. We define entry $(i, j, 1, \dots, 1)$ of $P_d$ to be $1$ if and only if entry $(i, j)$ of $P$ is $1$. We have $\sat(n, P_d, d) = n^{d-2} \sat(n, P)$. To see that $\sat(n, P_d, d) \le n^{d-2} \sat(n, P)$, consider any $n \times n$ 0-1 matrix $A$ with $\sat(n, P)$ ones that is saturating for $P$, and let $B$ be the $d$-dimensional 0-1 matrix of dimensions $n \times \dots \times n$ obtained from $A$ by defining entry $(x_1, x_2, x_3, \dots, x_d)$ of $B$ to be $1$ if and only if entry $(x_1, x_2)$ of $A$ is $1$. Clearly $B$ avoids $P_d$, since a copy of $P_d$ in $B$ would imply there is a copy of $P$ in $A$. Moreover if we change any zero in entry $(x_1, x_2, x_3, \dots ,x_d)$ of $B$ to a one, then we create a copy of $P_d$ in $B$ for which the last $d-2$ coordinates of all entries in the copy are $x_3, \dots, x_d$. This is because the entries of $B$ with last $d-2$ coordinates equal to $x_3, \dots, x_d$ form a copy of $A$ when restricted only to the first two dimensions, and changing any zero in $A$ to a one creates a copy of $P$. Thus $B$ is saturating for $P_d$, and $B$ has $n^{d-2} \sat(n, P)$ ones, so $\sat(n, P_d, d) \le n^{d-2} \sat(n, P)$.

To see that $\sat(n, P_d, d) \ge n^{d-2} \sat(n, P)$, suppose for contradiction that $\sat(n, P_d, d) < n^{d-2} \sat(n, P)$. Then there exists a $d$-dimensional 0-1 matrix $C$ of dimensions $n \times \dots \times n$ that avoids $P_d$ with fewer than $n^{d-2} \sat(n, P)$ ones such that changing any zero in $C$ to a one creates a copy of $P_d$. By the pigeonhole principle, there exist $x_3, \dots, x_d$ such that the entries of $C$ with last $d-2$ coordinates equal to $x_3, \dots, x_d$ contain fewer than $\sat(n, P)$ ones. Let $T$ be the $2$-dimensional 0-1 matrix for which entry $(i, j)$ of $T$ is $1$ if and only if entry $(i, j, x_3, \dots, x_d)$ of $C$ is $1$. Then $T$ is an $n \times n$ 0-1 matrix that is saturating for $P$, but $T$ has fewer than $\sat(n, P)$ ones, a contradiction. Thus we proved the following lemma.

\begin{lem}
Given any $2$-dimensional 0-1 matrix $P$ of dimensions $r \times s$, let $P_d$ be the $d$-dimensional 0-1 matrix of dimensions $r \times s \times 1 \times \dots \times 1$ for which entry $(i, j, 1, \dots, 1)$ of $P_d$ is $1$ if and only if entry $(i, j)$ of $P$ is $1$. Then $\sat(n, P_d, d) = n^{d-2} \sat(n, P)$.
\end{lem}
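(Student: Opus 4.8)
The plan is to prove the two inequalities $\sat(n, P_d, d) \le n^{d-2}\sat(n, P)$ and $\sat(n, P_d, d) \ge n^{d-2}\sat(n, P)$ separately, in both directions exploiting the key structural fact that, since every coordinate direction of $P_d$ beyond the first two has length $1$, any copy of $P_d$ inside an $n \times \dots \times n$ $0$-$1$ matrix is confined to a single \emph{slice}: a subconfiguration in which the last $d-2$ coordinates are held fixed. There are exactly $n^{d-2}$ such slices, and restricting to the first two coordinates identifies each slice with an $n \times n$ $0$-$1$ matrix.

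For the upper bound, I would take an $n \times n$ matrix $A$ attaining $\sat(n, P)$ that is saturating for $P$, and ``extrude'' it: define the $d$-dimensional matrix $B$ by declaring entry $(x_1, \dots, x_d)$ of $B$ to equal entry $(x_1, x_2)$ of $A$, independently of $x_3, \dots, x_d$. Every slice of $B$ is then a copy of $A$, so $B$ has $n^{d-2}\sat(n, P)$ ones; $B$ avoids $P_d$, because any copy of $P_d$ in $B$ lies in one slice and projects to a copy of $P$ in $A$; and $B$ is saturating, because flipping a $0$ at $(x_1, \dots, x_d)$ flips the corresponding $0$ of the copy of $A$ in the slice with last coordinates $x_3, \dots, x_d$, creating a copy of $P$ there and hence a copy of $P_d$ in $B$. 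This yields $\sat(n, P_d, d) \le n^{d-2}\sat(n, P)$.

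For the lower bound, I would argue by contradiction: suppose some $n \times \dots \times n$ matrix $C$ is saturating for $P_d$ with fewer than $n^{d-2}\sat(n, P)$ ones. Averaging the number of ones over the $n^{d-2}$ slices, the pigeonhole principle produces a slice with fewer than $\sat(n, P)$ ones; let $T$ be the associated $n \times n$ matrix. Then $T$ avoids $P$, since a copy of $P$ in $T$ would be a copy of $P_d$ in $C$. Moreover $T$ is saturating for $P$: flipping any $0$ of $T$ is the same as flipping the corresponding $0$ of $C$, which creates a copy of $P_d$ in $C$; since that copy lives in a single slice, and the only modified entry of $C$ lies in $T$'s slice, the copy must lie in $T$'s slice (otherwise that other slice already contained a copy of $P_d$ before the flip, contradicting that $C$ avoids $P_d$), so it gives a copy of $P$ in the modified $T$. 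Thus $T$ is saturating for $P$ with fewer than $\sat(n, P)$ ones, a contradiction, and $\sat(n, P_d, d) \ge n^{d-2}\sat(n, P)$. Combining the two bounds proves the lemma.

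The step requiring the most care is the saturation direction of the lower bound — in particular the observation that a newly created copy of $P_d$ cannot escape into a slice other than the one whose entry was modified. This is precisely where the length-$1$ extra dimensions of $P_d$ are used; everything else is routine bookkeeping.
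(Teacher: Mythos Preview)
Your proof is correct and follows essentially the same approach as the paper: the same extrusion construction for the upper bound and the same pigeonhole-on-slices contradiction for the lower bound. If anything, you are more careful than the paper in justifying why the newly created copy of $P_d$ must lie in the modified slice; the paper asserts that the slice $T$ is saturating for $P$ without spelling out that step.
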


It would be interesting to investigate the saturation functions of $d$-dimensional 0-1 matrices for which more than two of the dimensions have length greater than $1$. For example, what is $\sat(n, P, d)$ for every $d$-dimensional permutation matrix $P$?


\end{document}